\newcommand*{\doi}[1]{\href{http://dx.doi.org/\detokenize{#1}}{doi}}
\numberwithin{equation}{section}
\newtheorem{theorem}[equation]{Theorem}
\newtheorem{lemma}[equation]{Lemma}
\theoremstyle{definition}
\theoremstyle{remark}
\newcommand{\Prob}{\mathbf{P}}
\newcommand{\E}{\mathbb{E}}
\newcommand{\toppling}{\mathfrak{t}}
\newcommand{\Ex}{\mathbf{E}}
\newcommand{\1}{\mathds{1}}
\begin{document}

\title{Uniform threshold for fixation of the \\ stochastic sandpile model on the line}
\author{Moumanti Podder \and Leonardo T.\ Rolla}
\maketitle

\begin{abstract}
We consider the abelian stochastic sandpile model. In this model, a site is deemed unstable when it contains more than one particle. Each unstable site, independently, is toppled at rate 1, sending two of its particles to neighbouring sites chosen independently. We show that when the initial average density is less than 1/2, the system locally fixates almost surely. We achieve this bound by analysing the parity of the total number of times each site is visited by a large number of particles under the sandpile dynamics.
\end{abstract}



\section{Introduction}\label{intro}
We consider the abelian stochastic sandpile model on the integer line and obtain a uniform lower bound on the critical density threshold for its absorbing state phase transition. The stochastic sandpile model is a continuous time particle system on a finite or infinite graph, where the number of particles at a site $x$ is denoted by $\eta(x)$. An integer $k$ is specified and a site $x$ is declared \emph{unstable} when $\eta(x) \geqslant k$. Each unstable site $x$ \emph{topples} at rate $1$, sending $k$ of its particles to neighbours chosen independently according to a specified probability distribution. Our work focuses on the case $k = 2$. This abelian variation of Manna’s model~\cite{Manna91} is frequently considered in the physics and mathematics literature. We let $0 < q < 1$ denote the probability that each particle at $x$ is sent to $x-1$, so that $1-q$ is the probability with which it is sent to $x+1$. It was shown in~\cite{RollaSidoravicius12} that the critical density for this dynamics is positive, bounded from below by $q(1-q)$. In this paper we improve the analysis of~\cite{RollaSidoravicius12} and obtain a uniform lower bound of $\frac{1}{2}$ for every $0 < q < 1$. 

In \S\ref{background} below, we discuss physical motivation, related models and existing literature on these topics. In \S\ref{model}, we describe the model in detail and state the main result, and in \S\ref{proof_method:sec}, we discuss the main ideas involved in the proof of the result. In \S\ref{toppling_properties:sec} we state combinatorial properties of the sandpile dynamics in its particle-wise construction. In \S\ref{half_toppling:sec} we describe the stabilization strategy that is at the core of our proof. In \S\ref{half_toppling:subsec}, we define the \emph{half-toppling} operation; in \S\ref{traps:subsec}, we describe the setting of traps where the particles will ultimately be settled; in \S\ref{success_fixation:subsec}, we show that a successful execution of the stabilization strategy will imply that the system is in absorbing phase. Finally, in \S\ref{main:sec}, we show that the stabilization strategy succeeds with positive probability. 

\subsection{Background}\label{background} Stochastic sandpile models are an important example within a wide class of driven-dissipative lattice systems that naturally evolve to a critical state. Such systems are attracted to a critical equilibrium distribution without being specifically equipped with a tuning parameter for which a phase transition is observed. Some of the related models that can be found in the literature are \emph{abelian networks}, including \emph{oil and water model} and \emph{abelian mobile agents}~\cite{BondLevine16,BondLevine16a,BondLevine16b} and \emph{activated random walks}~\cite{Shellef10, AmirGurel-Gurevich10, CabezasRollaSidoravicius14, StaufferTaggi18, RollaTournier18, Taggi16, Taggi19, AsselahSchapiraRolla19, DickmanRollaSidoravicius10, Rolla19}. The latter is a special case of a class introduced by F. Spitzer in the 1970s. 

All these models involve strong non-locality of correlations and dynamic long-range effects, and the theory of \emph{self-organized criticality} attempts to justify this behaviour~\cite{Dhar99, Dhar06, Hinrichsen00}. The original models of self-organized criticality were defined on finite lattices. Consider the dynamics defined in a finite box in $\mathbb{Z}^{d}$. Whenever a particle moves to the exterior of the box, it is eliminated from the system. This is termed as \emph{dissipation}. Under this rule, since the box is finite, the system reaches an absorbing state after a finite time. At this point, a new particle is added uniformly randomly to the box. This is termed as \emph{driving}. This could destabilize the chosen site and possibly the entire box. The dynamics will then continue until a stable configuration is reached again. In this setting, the chain reactions or avalanches provoked by a single toppling are believed to display power law distributions at equilibrium and motivate the conjecture that they feature a critical behaviour. These models are termed \emph{driving-dissipation models} (DDM). 

In order to explain how the operations of driving and dissipation drift spontaneously towards criticality, a conservative lattice gas model known as the \emph{fixed energy sandpile} (FES) has been studied in~\cite{MunozDickmanPastor-SatorrasVespignaniZapperi01}. Simulations indicate that the FES model undergoes a phase transition at a critical density $\zeta_{c}$. For energy densities below $\zeta_{c}$, the FES goes to an absorbing state, and for densities above $\zeta_{c}$, it is in the active phase. This behaviour is similar to that exhibited in DDM models, where particles being gradually introduced are accommodated for inside the box as long as the density inside it remains below $\zeta_{c}$, leading to increases in the density. On the other hand, when the density is above $\zeta_{c}$, the toppling activity as well as dissipation at the boundary continue, bringing down the density. 

Many interacting particle systems, such as cellular automata and rotor networks, studied in statistical physics and combinatorics, have an underlying abelian property which guarantees that the order of the interactions has no effect on the final state of the system. Common techniques that have been used to study such systems include the least action principle, local-to-global principles, burning algorithm, transition monoids and critical groups. A generalization of rotor networks and abelian sandpiles, known as the \emph{height arrow model}, has been studied in~\cite{PriezzhevDharDharKrishnamurthy96, DartoisRossin04}. The version of stochastic sandpile models that we study in this paper is particularly influenced by the generalizations explored in~\cite{DiaconisFulton91} and~\cite{Eriksson96}. Yet another generalization of sandpiles is the oil and water model, where each edge of the graph is marked as an \emph{oil edge} or \emph{water edge}, and two types of particles, the oil ones and the water ones, are considered. A vertex topples only if sufficient numbers of \emph{both} types of particles are present, sending one oil particle along each outgoing oil edge and one water particle along each outgoing water edge. Stochastic versions of the oil and water model have been studied in~\cite{AlcarazPyatovRittenberg09} and~\cite{CandelleroGangulyHoffmanLevineothers17}.

We mention here that the \emph{deterministic sandpile model} has also been studied in the physics literature, forming a universality class of its own, exhibiting strong non-ergodic effects. The toppling procedure is unable to eliminate certain microscopic symmetries in the configuration by the time the system becomes unstable, due to the existence of several toppling invariants in this model. The stationary state in deterministic sandpile models is uniform over a special subset of configurations exhibiting combinatorial properties and a well-studied algebraic structure. In stochastic sandpile models, by contrast, the driving-dissipation operations are themselves random, leading to a set of coupled polynomial equations~\cite{SadhuDhar09}
and so few rigorous results regarding phase transitions in these models are known.

As noted in~\cite{RollaSidoravicius12}, the main questions that are pursued in this field are the attempts to understand the critical behaviour, the scaling relations and critical exponents, and whether the critical density $\zeta_{c}$ is the same as the asymptotic limit $\zeta_{s}$ observed in finite DDM systems at equilibrium as the size of the box grows.

The critical behaviour of stochastic sandpiles seems to belong to the same universality class as the depinning of a linear elastic interface subject to random pinning potentials, roughly depicted by a nailed carpet being detached from the floor by an external force of critical intensity, where the rupture of each nail induces other ruptures nearby, giving rise to avalanches~\cite{Dickman02}.

We discuss a few more notable results. In~\cite{SidoraviciusTeixeira17}, the existence of an absorbing phase is established for different models, including stochastic sandpile and activated random walks, for any dimension $d$, along with quantitative results on expected time of absorption and warm-up phase of the DDM models. In~\cite{FeyMeester15}, a discrepancy between stationary density and transitive density is shown to exist in a ``quenched'' version of Manna's sandpile model. 

\subsection{Description of the stochastic sandpile model}
\label{model}
We describe the \emph{stochastic sandpile model} (abbreviated henceforth as SSM) on $\mathbb{Z}$. At time $t$, the \emph{configuration} of this model is given by the (random) tuple $\eta_{t} = (\eta_{t}(x): x \in \mathbb{Z}) \in (\mathbb{N}_{0})^{\mathbb{Z}}$, where $\eta_{t}(x)$ is the random variable indicating the number of particles at site $x$ at time $t$. We assume that the distribution $\nu$ of the initial configuration $\eta_{0}$ of the particles is given by i.i.d.\ random variables $\eta_{0}(x)$ for $x \in \mathbb{Z}$ such that $\E[\eta_{0}(0)] = \zeta$. We let $\mathbb{P}^{\nu}$ denote the law of the stochastic process $\{\eta_{t}: t \geqslant 0\}$. 

We call a site $x \in \mathbb{Z}$ \emph{stable} at time $t$ if $\eta_{t}(x) \leqslant 1$, otherwise \emph{unstable}. Each unstable site $x$ \emph{topples} according to a Poisson clock, independent of all other sites, sending $2$ of the particles at $x$ to neighbours $y$ and $z$ of $x$ chosen independently with transition kernel $p(-1) = q$ and $p(1) = 1-q$ for some fixed $q \in [0,1]$. Formally, we can describe this process as follows: given any configuration $\eta \in (\mathbb{N}_{0})^{\mathbb{Z}}$, for each $x \in \mathbb{Z}$, the transition $\eta \rightarrow \toppling_{x,y} \toppling_{x,z} \eta$ happens at rate $A(\eta(x)) p(y-x) p(z-x)$, where 
\[
\toppling_{x,y}\eta(z) = 
 \begin{cases} 
 \eta(x)-1 & \text{if } z = x,\\
 \eta(y)+1 & \text{if } z = y,\\
 \eta(z) & \text{otherwise,}
 \end{cases}
\]
the indicator function $A(k) = \1_{k \geqslant 2}$ indicates whether the site is unstable or not, and $p(-1) = 1 - p(+1) = q$ and $p(z) = 0$ for all $z \neq \pm 1$. We say that the system \emph{locally fixates} if every site $x$ is visited only finitely many times by the particles and eventually becomes stable, i.e.\ $\eta_{t}(x)$ is eventually constant for each $x$; otherwise we say that the system \emph{stays active}. 

\begin{theorem}\label{main}
Consider the above-described dynamics of the stochastic sandpile model, with the distribution of the initial configuration $\eta_{0}$ given by i.i.d.\ random variables $\eta_{0}(x)$ for all $x \in \mathbb{Z}$ with $\E[\eta_{0}(0)] = \zeta$. If $\zeta < \frac{1}{2}$, then the system locally fixates almost surely.
\end{theorem}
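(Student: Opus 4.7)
The plan follows the roadmap sketched in the introduction: reduce to a finite-volume event of positive probability, organise the stabilization around half-topplings and traps, and carry out a parity-sensitive probabilistic estimate of the success of this strategy.

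For the reduction, I would appeal to the translation invariance of $\nu$ together with a standard $0$--$1$ argument to convert almost sure local fixation into the positivity of the probability that the origin is toppled only finitely many times. By the abelian property developed in §\ref{toppling_properties:sec}, this is in turn implied by a uniform positive lower bound on the probability that stabilizing $\eta_0$ restricted to a box $[-N,N]$ requires only boundedly many topplings at the origin. Abelianness also frees me to choose any convenient order of half-topplings (§\ref{half_toppling:subsec}), so the problem reduces to exhibiting one explicit strategy, adapted to the pre-sampled particle trajectories, whose successful completion certifies that the origin is toppled only finitely often.

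The strategy itself is the one laid out in §\ref{traps:subsec}: a sparse family of traps along $\mathbb{Z}$, each designed to absorb at most one particle, together with a deterministic routing of particles into traps by half-topplings driven by the already-sampled directions. That the strategy's success implies fixation of the full dynamics (§\ref{success_fixation:subsec}) would come out of a least-action / envelope argument comparing the number of half-topplings produced by the strategy with those required by any alternative stabilization scheme.

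The heart of the proof, and the main obstacle, is the probabilistic estimate in §\ref{main:sec}: that the strategy succeeds with positive probability when $\zeta<\tfrac12$. Writing the success event as a conjunction of local events indexed by traps, each of these local events depends only on the parity of the total number of particle visits to its region, since under half-topplings each additional visit flips the occupancy bit at the trap. The density constraint $\zeta<\tfrac12$ should ensure that the expected number of visits per unit length lies below the threshold at which these parity constraints become unsatisfiable, and the nearly-independent structure of contributions from the two sides of each trap makes a second-moment or renewal estimate plausible. The crux of the improvement over~\cite{RollaSidoravicius12} is that earlier envelope bounds discarded parity information, which caused the estimate $q(1-q)$ to degenerate near $q\in\{0,1\}$; keeping this parity data while retaining enough independence to run a concentration argument is where I expect the delicate bookkeeping to reside.
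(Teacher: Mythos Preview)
Your high-level architecture matches the paper's: reduce via Lemma~\ref{m(0)_finite} to $\Prob^\nu(m_{\eta_0}(0)<\infty)>0$, use half-topplings and the trap procedure of \S\ref{traps:subsec}, invoke the least-action comparison of \S\ref{success_fixation:subsec}, and then prove the settling procedure succeeds with positive probability. Where your proposal has a genuine gap is in that last step, and the mechanism you sketch for it would not yield a proof.

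First, the traps are not a pre-placed ``sparse family'' whose success events are nearly independent and amenable to a second-moment or renewal bound. They are constructed sequentially: the $(k+1)$-st explorer runs until it hits the previous barrier $b_k$, and the new trap $b_{k+1}$ is the minimum of two quantities, $a_{k+1}$ (the first site above $b_k$ whose \emph{cumulative} visit count by all $k+1$ explorers is odd) and $c_{k+1}$ (the site of the last right-jump of the $(k+1)$-st explorer). Success is $\{b_{k}\leqslant x_{k}\text{ for all }k\}$, and what must be controlled is the conditional tail of $b_{k+1}-b_k$ given the past.

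Second, the parity argument is not ``each visit flips an occupancy bit at the trap''. The actual device (Lemma~\ref{main_lemma}, \S\ref{proof:subsec:3}) is to condition on a filtration $\mathcal{S}_k^{s-1}$ that reveals the older $k{-}M{-}1$ explorers in full but the most recent $M{+}2$ explorers only up to their last visits to $b_k+s-1$. The visit count $D_{k,s}$ at $b_k+s$ then splits as an $\mathcal{S}_k^{s-1}$-measurable piece plus a sum of $M{+}2$ i.i.d.\ contributions, and Lemma~\ref{lem:parity_general} forces the parity of that sum within $\varepsilon$ of uniform once $M$ is large, giving $\Prob[a_{k+1}-b_k>s\mid\mathcal{G}_k]\leqslant(\tfrac12+\varepsilon)^s$. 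There is no ``two sides of each trap'' structure; the near-independence comes from the \emph{last few explorers}, not from spatial symmetry, and the two half-lines are handled separately.

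Third, you are missing a separate ingredient for the tail of $c_{k+1}-b_k$: one time-reverses the explorer's trajectory and identifies it with an $h$-transformed walk conditioned to stay positive (\S\ref{proof:subsec:1}, \S\ref{proof:subsec:4}), from which an explicit exponential bound follows. The two tail estimates are then assembled by stochastically dominating the increments $b_{k+1}-b_k$ by an i.i.d.\ sequence with mean strictly below some $\gamma_2<1/\zeta$ (Lemma~\ref{main_2}); a second-moment estimate is neither used nor obviously sufficient here.
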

 
\subsection{About the proof method}
\label{proof_method:sec}

We recall the argument used in~\cite{RollaSidoravicius12} to prove that $\zeta_c \geqslant \frac{1}{4}$ for $q = \frac{1}{2}$ (or $\zeta_c \geqslant q(1-q)$ in general) and comment on the new ideas introduced here to get a uniform bound $\zeta_c \geqslant \frac{1}{2}$ for all $0<q<1$.

The argument builds upon the \emph{site-wise representation}, which focuses on the total number of jumps rather than the order in which they occur.
Then one introduces the \emph{half-toppling} operation whereby stable particles that would otherwise remain still can be forced to move.
This new framework is related to the original representation for an appropriate definition of stability: sites which have been half-toppled an odd number of times are deemed unstable if they contain at least one particle.
A key property is that half-toppling stable sites cannot decrease the amount of activity in the system.

With this tool at hand, a typical tool to study fixation vs.\ activity is to specify a \emph{toppling procedure}: an explicit choice of which site should be toppled next, possibly after a preliminary stage where some bits of the underlying randomness are examined.
A toppling procedure is good if its analysis can yield good estimates.
In~\cite{RollaSidoravicius12},
an explorer is launched from each particle's position and finds a suitable \emph{trap} for that particle.
To keep the outcome of each explorer independent of previous ones, the procedure only uses properties intrinsic to the current exploration when setting up the trap, in a way that neither depends on the randomness exposed in previous steps, nor reveals any information which could affect the conditional distribution of subsequent steps.

The novelty here is that, instead of keeping the randomness completely hermetic between different steps, on the contrary we consider the parity at each site resulting from the passage of \emph{all previous explorers}.
We observe that the field of parities becomes roughly independent, with probabilities approximately $ \frac{1}{2} $.
This way, the trap can be set up using properties of the current explorer's trajectory combined with parity considerations.

\section{Site-wise representation}\label{toppling_properties:sec}

In order to define the random topplings, we start with a set of mutually independent random instructions $\mathscr{I} = (\toppling^{x,j}: x \in \mathbb{Z}, j \in \mathbb{N})$ where, for each $j \in \mathbb{N}$, the random variable $\toppling^{x,j}$ equals $\toppling_{x,x-1}$ with probability $q$ and $\toppling_{x,x+1}$ with probability $1-q$. We make $\mathscr{I}$ independent of the initial configuration $\eta_{0}$, and we let $\Prob^{\nu}$ denote the joint law of $(\eta_{0}, \mathscr{I})$. The coordinate $h(x)$ of the \emph{odometer} tuple $h = (h(x): x \in \mathbb{Z})$ denotes the number of topplings that have occurred at $x$ \emph{so far}. The toppling operation at a site $x$ that is unstable under a given configuration $\eta$, is defined by $\Phi_{x}(\eta, h) = (\toppling^{x, 2h(x)+2} \toppling^{x, 2h(x)+1} \eta, h + \delta_{x})$, where 
\[
(h + \delta_{x})(y) = 
\begin{cases} 
 h(x) + 1 & \text{if } y = x,\\
 h(y) & \text{otherwise.}
\end{cases}
\]
We let $h = \mathbf{0}$ indicate $h(y) = 0$ for all $y \in \mathbb{Z}$. We abbreviate $\Phi_{x}(\eta, \mathbf{0})$ as $\Phi_{x} \eta$. The toppling operation $\Phi_{x}$ at $x$ is termed \emph{legal} for a configuration $\eta$ when $x$ is unstable, i.e.\ when $\eta(x) \geqslant 2$, and \emph{illegal} otherwise. In the actual dynamics, illegal topplings do not occur.

In the rest of this section, the toppling instruction $\toppling^{x, j}$ is fixed and known for every site $x \in \mathbb{Z}$ and every $j \in \mathbb{N}$, and no randomness is involved. Given a sequence $\alpha = (x_{1}, x_{2}, \ldots, x_{k})$ of sites, we denote by $\Phi_{\alpha} = \Phi_{x_{k}} \Phi_{x_{k-1}} \cdots \Phi_{x_{2}} \Phi_{x_{1}}$ the composition of the topplings at the sites $x_{1}, x_{2}, \ldots, x_{k-1}, x_{k}$, in that order. We call $\Phi$ \emph{legal} for a configuration $\eta$ if $\Phi_{x_{1}}$ is legal for $\eta$ and $\Phi_{x_{\ell}}$ is legal for $\Phi_{(x_{1}, \ldots, x_{\ell-1})} \eta$ for each $\ell = 2, \ldots, k$. In this case we call $\alpha$ a \emph{legal sequence} of topplings for $\eta$. We define
\begin{equation}\label{m_defn}
m_{\alpha} = (m_{\alpha}(x): x \in \mathbb{Z}), \quad \text{where} \quad m_{\alpha}(x) = \sum_{\ell=1}^{k} \1_{x_{\ell} = x}
\end{equation}
to be the field that indicates the number of times a site $x$ is toppled in the sequence $\alpha$, for each $x \in \mathbb{Z}$. Henceforth, given any two fields $\gamma = (\gamma_{x}: x \in \mathbb{Z})$ and $\gamma' = (\gamma'_{x}: x \in \mathbb{Z})$, we define $\gamma \geqslant \gamma'$ if, for every $x \in \mathbb{Z}$, we have $\gamma(x) \geqslant \gamma'(x)$. 

Given a finite subset $V$ of $\mathbb{Z}$ and a configuration $\eta$, we say that $\eta$ is \emph{stable inside} $V$ if every site $x \in V$ is stable in $\eta$, i.e.\ $\eta(x) \leqslant 1$ for each $x \in V$. Given a toppling sequence $\alpha = (x_{1}, x_{2}, \ldots, x_{k})$, we say that $\alpha$ is \emph{contained in $V$} if $x_{\ell} \in V$ for all $\ell = 1, \ldots, k$. We say that $\alpha$ \emph{stabilizes} a given configuration $\eta$ in $V$ if every site $x \in V$ is stable in $\Phi_{\alpha}\eta$. 

We refer to [\cite{RollaSidoravicius12}, Lemmas 1, 2 and 3] for the following lemmas:
\begin{lemma}\label{least_action_principle}
If $\alpha$ and $\beta$ are both legal sequences of topplings for a configuration $\eta$, the sequence $\beta$ is contained in a finite subset $V$ of $\mathbb{Z}$ and $\alpha$ stabilizes $\eta$ in $V$, then $m_{\beta} \leqslant m_{\alpha}$.
\end{lemma}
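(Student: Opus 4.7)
The plan is to argue by contradiction using a ``first violation'' extraction, after setting up a bookkeeping identity for the site-wise representation that enables a clean monotonicity comparison.

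For an arbitrary legal sequence $\sigma$ and every site $x$,
\[
\Phi_\sigma \eta(x) = \eta(x) - 2 m_\sigma(x) + \sum_{y : |y-x|=1} n_\sigma(y \to x),
\]
where $n_\sigma(y \to x)$ denotes the number of particles sent from $y$ to $x$ during the first $m_\sigma(y)$ topplings at $y$. Since the instructions $\toppling^{y,j}$ are fixed in advance and consumed in their natural order, $n_\sigma(y \to x)$ depends on $\sigma$ only through $m_\sigma(y)$ and is non-decreasing in $m_\sigma(y)$. In particular $\Phi_\sigma \eta$ is determined entirely by $m_\sigma$, which is the strong abelian property in the deterministic site-wise representation.

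Equipped with this identity, suppose for contradiction that $m_\beta(x) > m_\alpha(x)$ for some $x$. Writing $\beta = (y_1, \ldots, y_k)$, I pick the smallest index $j$ such that the prefix odometer of $\beta$ first strictly exceeds $m_\alpha$ at some site; this overtaking must occur precisely at the toppled site $y_j$. Setting $\beta_{<j} := (y_1, \ldots, y_{j-1})$, minimality yields $m_{\beta_{<j}} \leqslant m_\alpha$ coordinate-wise, with equality at $y_j$. Applying the bookkeeping identity to both $\beta_{<j}$ and $\alpha$ at $x = y_j$, the $-2m$ terms cancel at $y_j$ while the neighbour contributions satisfy $n_{\beta_{<j}}(y \to y_j) \leqslant n_\alpha(y \to y_j)$ by monotonicity. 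This gives
\[
\Phi_{\beta_{<j}} \eta(y_j) \leqslant \Phi_\alpha \eta(y_j) \leqslant 1,
\]
where the last step uses that $y_j \in V$ (since $\beta$ is contained in $V$) together with the hypothesis that $\alpha$ stabilizes $\eta$ in $V$. But legality of $\beta$ demands $\Phi_{\beta_{<j}} \eta(y_j) \geqslant 2$ for the $j$-th toppling to be performed, which is the contradiction.

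The main thing to get right is the bookkeeping: one has to verify that $n_\sigma(y \to x)$ really does depend on $\sigma$ only through $m_\sigma(y)$ and is monotone in it. This is immediate from the site-wise construction, since the instructions at each site are consumed in a fixed order independently of the activity elsewhere, but it is the step on which the whole comparison rests. Once this is noted, the first-violation extraction closes the argument with no further calculation.
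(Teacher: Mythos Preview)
Your argument is correct and is precisely the standard ``first violation'' proof of the least action principle for abelian networks. The paper itself does not give a proof but merely refers to~\cite{RollaSidoravicius12}, Lemma~1; the argument there is essentially the one you wrote: pick the first step of $\beta$ at which the prefix odometer overtakes $m_\alpha$, use the mass-balance identity together with the fact that the number of particles sent from a neighbour depends monotonically on the neighbour's odometer, and conclude that the site about to be toppled is already stable, contradicting legality. One cosmetic remark: when you say the $-2m$ terms ``cancel at $y_j$'' you mean they are equal in the two expressions, not that they sum to zero; the intended meaning is clear from context.
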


\begin{lemma}\label{abelian_toppling}
Given a finite subset $V$ of $\mathbb{Z}$, a configuration $\eta$, and two toppling sequences $\alpha$ and $\beta$ that are legal for $\eta$, contained in $V$ and stabilizing $\eta$ in $V$, we have $m_{\alpha} = m_{\beta}$.
\end{lemma}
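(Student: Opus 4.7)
The plan is to derive Lemma~\ref{abelian_toppling} as a quick symmetric application of the least action principle (Lemma~\ref{least_action_principle}) already stated. Both hypotheses placed on $\alpha$ and $\beta$ are identical and symmetric: each is legal for $\eta$, each is contained in the finite set $V$, and each stabilizes $\eta$ in $V$. This built-in symmetry is exactly what makes the least action principle yield an equality rather than a mere inequality.

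Concretely, I would first apply Lemma~\ref{least_action_principle} with $\alpha$ in the role of the stabilizing sequence and $\beta$ in the role of the legal sequence contained in $V$. The hypotheses of that lemma are met verbatim: $\alpha$ and $\beta$ are both legal for $\eta$, $\beta \subseteq V$, and $\alpha$ stabilizes $\eta$ in $V$. The conclusion gives $m_{\beta} \leqslant m_{\alpha}$. I would then swap the roles of $\alpha$ and $\beta$ and apply the same lemma again; the hypotheses remain satisfied by symmetry, yielding $m_{\alpha} \leqslant m_{\beta}$. Combining the two coordinatewise inequalities on $\mathbb{Z}$ gives $m_{\alpha}(x) = m_{\beta}(x)$ for every $x \in \mathbb{Z}$, i.e.\ $m_{\alpha} = m_{\beta}$, as required.

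There is essentially no obstacle here beyond checking that the hypotheses of Lemma~\ref{least_action_principle} really do apply in both directions. The only subtle point worth verifying carefully is that the lemma's statement does not impose ``$\alpha$ contained in $V$'' on the stabilizing sequence (only on the legal one being compared), so the two applications above are both literal instances of that lemma. Once this is confirmed, the argument is a two-line consequence, and no further machinery (such as direct induction on toppling instructions, or a combinatorial exchange argument on common prefixes of $\alpha$ and $\beta$) is needed.
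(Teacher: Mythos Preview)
Your proposal is correct: applying Lemma~\ref{least_action_principle} twice with the roles of $\alpha$ and $\beta$ swapped is exactly the standard derivation, and the hypotheses check out in both directions as you note. The paper itself does not give a proof of Lemma~\ref{abelian_toppling} but simply refers to~\cite{RollaSidoravicius12}; your argument is precisely the one found there.
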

This lemma allows us to define $m_{V, \eta} = m_{\alpha}$ for any toppling sequence $\alpha$ that is legal for $\eta$, contained in $V$ and stabilizes $\eta$ in $V$.

\begin{lemma}\label{monotonic_toppling}
If $V$ and $V'$ are two finite subsets of $\mathbb{Z}$ with $V \subseteq V'$, and $\eta$ and $\eta'$ are two configurations with $\eta \leqslant \eta'$, then $m_{V, \eta} \leqslant m_{V', \eta'}$.
\end{lemma}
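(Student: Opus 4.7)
The plan is to use any legal stabilizing sequence for $\eta$ in $V$ as the opening segment of a legal stabilizing sequence for $\eta'$ in $V'$, and then to compare odometers via Lemma~\ref{abelian_toppling}. Fix any legal toppling sequence $\alpha=(x_{1},\ldots,x_{k})$ for $\eta$, contained in $V$, which stabilizes $\eta$ in $V$; by Lemma~\ref{abelian_toppling}, $m_{\alpha}=m_{V,\eta}$.

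The crux is the coupling claim that this very same $\alpha$ is legal for $\eta'$ as well, and that $\Phi_{(x_{1},\ldots,x_{\ell})}\eta \leqslant \Phi_{(x_{1},\ldots,x_{\ell})}\eta'$ holds pointwise on $\mathbb{Z}$ for every $\ell=0,1,\ldots,k$. I would prove this by induction on $\ell$, with the base case $\ell=0$ being the hypothesis $\eta\leqslant\eta'$. For the inductive step, legality of $\alpha$ at $\eta$ gives $\Phi_{(x_{1},\ldots,x_{\ell-1})}\eta(x_{\ell})\geqslant 2$, whence $\Phi_{(x_{1},\ldots,x_{\ell-1})}\eta'(x_{\ell})\geqslant 2$ by the inductive inequality, establishing legality for $\eta'$. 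Both configurations are then updated using the very same instructions $\toppling^{x_{\ell},2h(x_{\ell})+1}$ and $\toppling^{x_{\ell},2h(x_{\ell})+2}$, where $h(x_{\ell})$ counts prior topplings at $x_{\ell}$ within $\alpha$; these operations subtract $1$ at $x_{\ell}$ and add $1$ at the same two neighbours in both cases, so the pointwise inequality is preserved.

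Since $V \subseteq V'$, the sequence $\alpha$ is contained in $V'$. One then extends $\alpha$ by a further legal toppling sequence $\gamma$ contained in $V'$ that stabilizes $\Phi_{\alpha}\eta'$ in $V'$; such a $\gamma$ exists because $V'$ is finite and particles can exit its boundary, so iterated toppling of unstable sites in $V'$ terminates. The concatenation $\alpha\gamma$ is then legal for $\eta'$, contained in $V'$, and stabilizes $\eta'$ in $V'$, so Lemma~\ref{abelian_toppling} yields $m_{\alpha\gamma}=m_{V',\eta'}$. Since $m_{\alpha\gamma}=m_{\alpha}+m_{\gamma}\geqslant m_{\alpha}=m_{V,\eta}$, the desired inequality $m_{V,\eta}\leqslant m_{V',\eta'}$ follows.

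The step that requires the most care is the coupling argument: one must observe that the instruction used at the $\ell$th step of $\alpha$ depends only on the site $x_{\ell}$ and on the number of previous topplings at $x_{\ell}$ within $\alpha$ — quantities identical for the two initial configurations — and not on the configuration itself, so a common toppling sequence really does produce configurations in the same pointwise order. The existence of the extension $\gamma$ is a routine finite-state verification.
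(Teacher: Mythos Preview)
The paper does not give its own proof of this lemma; it simply cites \cite{RollaSidoravicius12}, Lemma~3. Your argument is correct and is essentially the standard one, with one small detour: having established that the legal stabilizing sequence $\alpha$ for $\eta$ in $V$ is also legal for $\eta'$ and contained in $V'$, you can invoke Lemma~\ref{least_action_principle} directly (taking $\beta=\alpha$ there and the stabilizing sequence to be any legal stabilizer of $\eta'$ in $V'$) to conclude $m_{V,\eta}=m_{\alpha}\leqslant m_{V',\eta'}$, bypassing the construction of the extension $\gamma$ and the appeal to Lemma~\ref{abelian_toppling}. That is the route taken in the cited reference, and it is slightly cleaner.

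One caveat on your justification for the existence of $\gamma$: ``$V'$ is finite and particles can exit its boundary'' is not a deterministic guarantee for an arbitrary fixed instruction array~$\mathscr{I}$ (nothing in the combinatorial setup forbids, say, all instructions at two adjacent sites pointing at each other). The clean way to get existence of $\gamma$ is to note that the statement of the lemma already presupposes $m_{V',\eta'}$ is well defined, i.e.\ some legal stabilizing sequence for $\eta'$ in $V'$ exists; combined with Lemma~\ref{least_action_principle} and the abelian property, this ensures that any legal sequence in $V'$ (in particular $\alpha$) can be legally continued to stabilization. Your coupling step, including the observation that the instructions used depend only on the sequence $\alpha$ and not on the configuration, is exactly right and is the key point.
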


Due to this monotonicity result, choosing \emph{any} sequence $\{V_{n}\}_{n \in \mathbb{N}}$ of monotonically increasing finite subsets of $\mathbb{Z}$, i.e.\ $V_{n} \uparrow \mathbb{Z}$, we can define the limit $m_{\eta} = \lim_{n \rightarrow \infty} m_{V_{n}, \eta}$, and this limit does not depend on the particular sequence of subsets chosen. The following result, used in this paper to establish fixation when $\zeta < \frac{1}{2}$, is Lemma 4 of~\cite{RollaSidoravicius12}. Recall $\nu$, $\mathscr{I}$, $\mathbb{P}^{\nu}$ and $\Prob^{\nu}$ from discussions above. 
\begin{lemma}\label{m(0)_finite}
Let $\nu$ be a translation-invariant, ergodic distribution with finite density $\nu(\eta_{0}(0))$. Then 
\begin{equation}
\mathbb{P}^{\nu}(\text{the system locally fixates}) = \Prob^{\nu}(m_{\eta_{0}}(0) < \infty) \in \{0,1\}. \nonumber
\end{equation}
\end{lemma}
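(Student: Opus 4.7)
The plan is to prove the lemma in two pieces: first, an almost-sure equivalence between local fixation of the Markov dynamics and the event $\{m_{\eta_{0}}(0) < \infty\}$, and second, a zero--one law for that event via translation invariance and ergodicity.

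For the first piece, I would realise the continuous-time dynamics as a deterministic function of $(\eta_{0}, \mathscr{I})$ together with an auxiliary family of independent rate-$1$ Poisson clocks $(N^{x} : x \in \mathbb{Z})$: each ring of $N^{x}$ that finds the current configuration unstable at $x$ executes the pair of pending instructions $\toppling^{x, 2h(x)+1}\toppling^{x, 2h(x)+2}$ and increments $h(x)$. Write $h_{t}$ for the resulting odometer and $h_{\infty} = \lim_{t \to \infty} h_{t}$. The prefix of topplings performed up to time $t$ is legal for $\eta_{0}$ and contained in some finite set, and its odometer is dominated by $m_{V_{n}, \eta_{0}}$ as soon as $V_{n}$ contains that set, by Lemmas~\ref{least_action_principle} and~\ref{monotonic_toppling}; letting $t, n \to \infty$ gives $h_{\infty} \leqslant m_{\eta_{0}}$. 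A Borel--Cantelli argument on the Poisson clocks shows that, almost surely, any site which ever becomes unstable is eventually toppled; iterating this along a legal stabilizing sequence for $\eta_{0}$ in $V_{n}$ yields the reverse inequality $h_{\infty}(x) \geqslant m_{V_{n}, \eta_{0}}(x)$ for each $n$ and $x$. Hence $h_{\infty} = m_{\eta_{0}}$, so local fixation at every site is equivalent to $m_{\eta_{0}}(x) < \infty$ for every $x$.

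For the second piece, I would show that the event $E = \{m_{\eta_{0}}(0) < \infty\}$ agrees, modulo $\Prob^{\nu}$-null sets, with the translation-invariant event $F = \bigcap_{x \in \mathbb{Z}}\{m_{\eta_{0}}(x) < \infty\}$. The input is a particle-conservation identity inside each finite interval $V_{n}$: after legally stabilizing $\eta_{0}$ in $V_{n}$, every interior site contains at most one particle, so the initial mass inside $V_{n}$ plus the particles entering through $\partial V_{n}$ equals the final mass plus the particles leaving through $\partial V_{n}$. This furnishes a deterministic inequality expressing $m_{V_{n}, \eta_{0}}$ at a site in terms of its values at neighbouring sites and of $\eta_{0}$ there. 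Taking monotone limits, finiteness of $m_{\eta_{0}}(0)$ forces finiteness at $\pm 1$, then at $\pm 2$, and so on. Since $\nu$ is translation-invariant and ergodic and $\mathscr{I}$ is i.i.d., the joint law $\Prob^{\nu}$ is translation-invariant and ergodic under the shift on $(\eta_{0}, \mathscr{I})$, so $\Prob^{\nu}(F) \in \{0,1\}$.

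The delicate point is the propagation step in the second piece: because $m_{\eta_{0}}$ is defined only as a monotone limit of finite-volume odometers and need not be realized by any single stabilizing sequence, the propagation of finiteness from $0$ to its neighbours must be established deterministically at the level of $m_{V_{n}, \eta_{0}}$ and only then transported to the limit. The other steps are essentially bookkeeping around the coupling between the Markov dynamics and the site-wise construction, which is exactly what the machinery assembled in \S\ref{toppling_properties:sec} is designed to support.
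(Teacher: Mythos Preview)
The paper does not prove this lemma; it is simply quoted as Lemma~4 of~\cite{RollaSidoravicius12}. Your two-piece outline (coupling $h_\infty$ to $m_{\eta_0}$ via the site-wise construction, then a zero--one law through propagation of finiteness) is essentially the argument given in that reference, so you have reconstructed the cited proof rather than diverged from it.

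That said, one step in your second piece does not work as written. You claim that particle conservation in $V_n$ ``furnishes a deterministic inequality expressing $m_{V_n,\eta_0}$ at a site in terms of its values at neighbouring sites and of $\eta_0$ there,'' and that this propagates finiteness from $0$ to $\pm 1$. But mass balance at site $0$ reads
\[
2\,m_{V_n,\eta_0}(0) \;=\; \eta_0(0) + J_n^{-1\to 0} + J_n^{1\to 0} - (\text{final occupancy at }0),
\]
where $J_n^{1\to 0}$ is the number of \emph{left-pointing} instructions among the first $2\,m_{V_n,\eta_0}(1)$ instructions at site $1$, not $m_{V_n,\eta_0}(1)$ itself. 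If every instruction at site $1$ happened to point right, $m_{\eta_0}(1)$ could be infinite while $m_{\eta_0}(0)$ stayed finite, so conservation alone cannot force the propagation you need. The missing ingredient is the non-degeneracy $0<q<1$ of the instruction law: almost surely infinitely many instructions at each site point in each direction, so $m_{V_n,\eta_0}(1)\to\infty$ forces $J_n^{1\to 0}\to\infty$ and hence $m_{V_n,\eta_0}(0)\to\infty$. With this inserted, your propagation step goes through and the zero--one law follows. A smaller omission in the first piece: the assertion that the topplings performed by the infinite-volume dynamics up to time $t$ form a legal sequence \emph{contained in some finite set} needs a light-cone/graphical-construction argument, which you have left implicit.
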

This theorem tells us that, in order to establish local fixation almost surely, it is enough to show that the origin is toppled a finite number of times with positive probability.


\section{Half-toppling and strategy for stabilization}\label{half_toppling:sec}
At the very outset of this section, we mention that much of the description of the stabilization algorithm is the same as that in~\cite{RollaSidoravicius12}. We construct an algorithm that, given any random pair $(\eta_{0}, \mathscr{I})$ generated according to the aforementioned distribution, tries to stabilize the particles according to the instructions in $\mathscr{I}$, but includes some additional, ``semi-legal'' single-topplings, whereby stable sites, which otherwise would have remain untouched, are now \emph{half-toppled}. We need to make sure that whenever the algorithm is \emph{successful}, we have $m_{\eta}(0) = 0$, and that it succeeds with positive probability. 

This algorithm prescribes a settling procedure or rule for each particle, by defining a suitable \emph{trap} for the particle. We imagine an \emph{explorer} associated with each particle, and the trap for the particle is decided by the trajectory of this explorer. It may be helpful for the reader to visualize the explorer as acting like a scout for an army, riding ahead and inspecting the terrain before the army moves. The exploration follows the path that the particle would traverse if we always toppled the site it occupies, and it stops at the site where the trap has been placed for the particle. We declare the algorithm \emph{successful} if we are able to define a suitable trap for every particle. The trap will always lie on the exploration path of the particle, but it need not lie at the end of the exploration path, as we generally have to expose some instructions in $\mathscr{I}$ further along the exploration path before we can decide where the trap must lie. Once the trap has been chosen, the particle is moved along the exploration path until it reaches the trap, where it is settled.

We make sure that the following conditions are satisfied by this algorithm. We do not disturb the particles that have already been settled, which requires that a particle is not allowed to visit the sites where previous particles have been settled already. We try to ensure that the traps for different particles are placed as close to each other as possible, so that the next particles have greater room for exploration. We call an instruction $\toppling^{x,j}$ in $\mathscr{I}$ \emph{corrupted} (and the corresponding site $x$ \emph{corrupted} as well) if this instruction has been exposed while deciding where to place the trap for some particle but is not actually executed by the particle. We make sure that the corrupted site $x$ is not visited by the subsequent particles, so as to ensure mutual independence of explorations of different particles. We also try to keep the regions containing the corrupted sites as compact as possible. 

\subsection{Half-topplings}\label{half_toppling:subsec} In this subsection, we define \emph{half-topplings}, relaxing the restrictions of the original dynamics, and discuss their properties in the spirit of \S\ref{toppling_properties:sec}. In a half-toppling, a single particle from the site concerned, say $x$, is sent to the neighbouring site $x-1$ with probability $q$ and to $x+1$ with probability $1-q$. If for some $x \in \mathbb{Z}$ and some configuration $\eta$, we have $\eta(x) = 1$, then $x$ is stable if it has been half-toppled an even number of times, and unstable if it has been half-toppled an odd number of times. A half-toppling at $x$ under the configuration $\eta$ is considered legal if $x$ is unstable, and \emph{semi-legal} when $\eta(x) \geqslant 1$, regardless of whether $x$ is stable or not. 

The half-toppling operation performed on the configuration $\eta$, at the site $x$, is denoted $\varphi_{x} \eta$, so that given an odometer $h$, we define $\varphi_{x}(\eta, h) = (\toppling^{x, 2h+1} \eta, h + \frac{1}{2} \delta_{x})$. As before, we abbreviate $\varphi_{x}(\eta, \mathbf{0})$ by $\varphi_{x} \eta$. This definition implies that $\Phi_{x} = \varphi_{x} \varphi_{x}$, i.e.\ the composition of two half-topplings at $x$ constitutes a toppling at $x$.

Definitions analogous to those introduced for the toppling operations can also be introduced for half-topplings. Given a sequence $\beta = (y_{1}, y_{2}, \ldots, y_{k})$ of sites, we define the sequence of half-topplings $\varphi_{\beta}$ as the composition $\varphi_{y_{k}} \varphi_{y_{k-1}} \cdots \varphi_{1}$ of half-topplings. We define, for any site $y \in \mathbb{Z}$,
\begin{equation}\label{widetilde_m_defn}
\widetilde{m}_{\beta}(y) = \sum_{\ell=1}^{k} \1_{y_{\ell} = y}, \quad \text{and} \quad \widetilde{m}_{\beta} = (\widetilde{m}_{\beta}(y): y \in \mathbb{Z}).
\end{equation}
We note here that although there is no difference between the definitions in \eqref{m_defn} and \eqref{widetilde_m_defn}, the $\widetilde{m}_{\beta}$ emphasizes that the $\beta$ here refers to a sequence of half-topplings and not a sequence of topplings. Given such a $\beta$, we define $\beta^{2} =(y_{1}, y_{1}, y_{2}, y_{2}, \ldots, y_{k}, y_{k})$, which gives us $m_{\beta} = \frac{1}{2}\widetilde{m}_{\beta^{2}}$. As before, given $V$ and $\beta = (y_{1}, y_{2}, \ldots, y_{k})$, we say that $\beta$ is \emph{contained in $V$} if $x_{\ell} \in V$ for all $\ell = 1, \ldots, k$. We say that $\beta$ \emph{stabilizes} $\eta$ in $V$ if every $x \in V$ is stable in $\varphi_{\beta}\eta$, i.e.\ for each site $x \in V$, either $\varphi_{\beta}\eta(x) = 0$ or $\varphi_{\beta}\eta(x) = 1$ and the site has been half-toppled an even number of times. We refer to [\cite{RollaSidoravicius12}, Lemma 6] for the following lemma.

\begin{lemma}\label{least_action_half_toppling}
Let $\beta$ be a sequence of half-topplings contained in $V$ and legal for $\eta$, and $\alpha$ another sequence of half-topplings that is semi-legal for $\eta$ and stabilizes $\eta$ in $V$. Then $\widetilde{m}_{\beta} \leqslant \widetilde{m}_{\alpha}$.
\end{lemma}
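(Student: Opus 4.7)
My plan is to mimic the classical proof of the least-action principle (Lemma~\ref{least_action_principle}) in the half-toppling setting, proceeding by induction on $k = |\beta|$, where $\beta = (y_1, \ldots, y_k)$. The base case $k = 0$ is immediate. For the inductive step, observe that since the very first half-toppling $\varphi_{y_1}$ in $\beta$ is legal at $\eta$ with the zero odometer, the site $y_1$ must be unstable there, and because the parity of prior half-topplings at $y_1$ is even (in fact zero), this forces $\eta(y_1) \geq 2$.

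The heart of the proof is a rearrangement argument. First, $y_1$ must appear in $\alpha$: otherwise every site toppled by $\alpha$ differs from $y_1$, half-topplings at other sites can only add particles at $y_1$, and hence $\varphi_{\alpha}\eta(y_1) \geq \eta(y_1) \geq 2$, contradicting that $\alpha$ stabilizes $\eta$ in $V \ni y_1$. Let $j$ be the first position of $y_1$ in $\alpha$ and write $\alpha = (z_1, \ldots, z_{j-1}, y_1, z_{j+1}, \ldots, z_m)$ with $z_i \neq y_1$ for $i < j$. I form the rearrangement $\tilde\alpha = (y_1, z_1, \ldots, z_{j-1}, z_{j+1}, \ldots, z_m)$ and aim to show that $\tilde\alpha$ is again semi-legal for $\eta$, stabilizes $\eta$ in $V$, and satisfies $\widetilde{m}_{\tilde\alpha} = \widetilde{m}_{\alpha}$. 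Granted this, setting $\beta' = (y_2, \ldots, y_k)$ and $\tilde\alpha' = (z_1, \ldots, z_{j-1}, z_{j+1}, \ldots, z_m)$, the sequences $\beta'$ and $\tilde\alpha'$ are respectively legal and semi-legal for $\varphi_{y_1}\eta$, contained in $V$, with $\tilde\alpha'$ stabilizing $\varphi_{y_1}\eta$ in $V$. The inductive hypothesis then gives $\widetilde{m}_{\beta'} \leq \widetilde{m}_{\tilde\alpha'}$, and adding $\delta_{y_1}$ to both sides yields $\widetilde{m}_{\beta} \leq \widetilde{m}_{\tilde\alpha} = \widetilde{m}_{\alpha}$.

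The step I expect to be the main obstacle is the rearrangement verification, which I would carry out by executing the move of $\varphi_{y_1}$ to the front of $\alpha$ as $j-1$ successive adjacent transpositions. Since $z_i \neq y_1$ throughout the prefix, the instruction index used at each of $y_1$ and $z_i$ depends only on the odometer at that site, which is unaffected by such a swap, so the two half-topplings commute as operations on configurations whenever both are semi-legal at the relevant intermediate state. Semi-legality of $\varphi_{y_1}$ at every intermediate step in the prefix follows from $\eta(y_1) \geq 2$ together with the fact that none of the $z_i$ equal $y_1$, so the count at $y_1$ never drops; semi-legality of $\varphi_{z_i}$ after pulling $\varphi_{y_1}$ across it uses the monotone bound $\varphi_{y_1}\eta'(z_i) \geq \eta'(z_i) \geq 1$, valid because half-topplings can only add particles at sites other than their origin. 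The final configuration is unchanged by the swaps, so stabilization in $V$ is inherited from $\alpha$, and $\widetilde{m}_{\tilde\alpha} = \widetilde{m}_{\alpha}$ is immediate since the multiset of toppled sites is preserved.
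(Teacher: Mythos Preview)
The paper does not give its own proof of this lemma; it simply refers to \cite{RollaSidoravicius12}, Lemma~6. Your argument is the standard Diaconis--Fulton style rearrangement proof used there, and the core of it---that $y_1$ must occur in $\alpha$, and that the first such occurrence can be commuted to the front through adjacent transpositions while preserving semi-legality and the final state---is correct.

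There is one technical wrinkle worth fixing. Your induction is on $|\beta|$ for the lemma as stated, i.e.\ starting from odometer $\mathbf{0}$. But after peeling off $\varphi_{y_1}$, the pair $(\varphi_{y_1}\eta,\tfrac{1}{2}\delta_{y_1})$ no longer has zero odometer, so the inductive hypothesis as written does not apply to $\beta'$ and $\tilde\alpha'$. The fix is to run the induction for arbitrary starting pair $(\eta,h)$ with $h\in\tfrac{1}{2}\mathbb{Z}_{\geqslant 0}^{\mathbb{Z}}$. In that generality your claim ``$\eta(y_1)\geqslant 2$'' may fail (if $2h(y_1)$ is odd, legality only gives $\eta(y_1)\geqslant 1$), but this weaker bound is all you need: it still forces $y_1$ to appear in $\alpha$ (otherwise the odometer parity at $y_1$ is unchanged and $y_1$ remains unstable after $\alpha$, contradicting stabilization), and it still makes $\varphi_{y_1}$ semi-legal at every intermediate state in the rearrangement. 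With this adjustment your proof goes through unchanged.
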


\subsection{Procedure for setting traps}\label{traps:subsec} Recall the random walk bias parameter $q$ from \S\ref{model}. By symmetry we may assume that $q \geqslant \frac{1}{2}$. We shall focus on the positive half-line of integers $\mathbb{Z}^{+}$. The argument for the particles on $\mathbb{Z}^{-}$ is symmetric when $q = \frac{1}{2}$, and easier when $q > \frac{1}{2}$, as discussed farther below.

Let $0 < x_{1} < x_{2} < x_{3} < \cdots$ denote the positions of the particles under the initial configuration $\eta_{0}$. Given realizations of $\mathscr{I}$ and $\eta_{0}$, we construct the settling procedure as follows. We set $b_{0} = 0$, and consider the first particle, located at $x_{1}$. The explorer associated with this particle now reveals the instructions in $\mathscr{I}$ one by one, starting at $x_{1}$ and then following the path that would have been traversed by the particle if we were to half-topple the site currently containing it, over and over again, until it hits barrier $b_{0}$ for the first time. 

We now set the trap for the first particle. We let $\tau_{1}$ denote the first time that it hits barrier $b_{0}$, and let $\{X^{1}_{n}\}_{0 \leqslant n \leqslant \tau_{1}}$ denote the trajectory of the first explorer, where $X^{1}_{0} = x_{1}$. We define
\begin{equation}
a_{1} = \min\Big\{x \in \mathbb{Z}: \sum_{n=0}^{\tau_{1}} \1_{X^{1}_{n} = x} \equiv 1 \bmod 2\Big\}. \nonumber
\end{equation}
Let 
\begin{equation}
\theta_{1} = \max\big\{1 \leqslant n < \tau_{1}: X^{1}_{n} = X^{1}_{n-1} + 1\big\}, \nonumber
\end{equation}
so that the last time that the first explorer jumps to the right is at time $\theta_{1}-1$. We then let 
\begin{equation}
c_{1} = X^{1}_{\theta_{1}}. \nonumber
\end{equation}
We then choose
\begin{equation}
b_{1} = \min\{a_{1}, c_{1}\}, \nonumber
\end{equation}
which will serve as the barrier for the second explorer starting at $x_{2}$. The trap for the first particle has now been set up at $b_{1}$. If both $a_{1}$ and $c_{1}$ are greater than $x_{1}$, we define $b_{k} = \infty$ for every $k \in \mathbb{N}$, stop and declare the settling procedure to have failed. If the trap for the first particle is successfully set up, we have $b_{1} \in [b_{0}+1, x_{1}] \cap \mathbb{Z}$. 

Suppose the first $k$ traps have been laid down already, at positions $0 < b_{1} < b_{2} < \cdots < b_{k}$. The $(k+1)$-st explorer, starting at $x_{k+1}$, reveals and follows, one by one, the instructions from $\mathscr{I}$ corresponding to the current site it is occupying, that have not been revealed by the previous $k$ explorers, until it hits $b_{k}$ for the first time. Its exploration is described by $\{X^{k+1}_{n}\}_{0 \leqslant n \leqslant \tau_{k+1}}$, where $X^{k+1}_{0} = x_{k+1}$ and $\tau_{k+1}$ is the first time that it hits $b_{k}$. 

We define
\begin{equation}\label{a_{k}_defn}
a_{k+1} = \min\Big\{x > b_{k}: \sum_{j=1}^{k+1} \sum_{n=0}^{\tau_{j}} \1_{X^{j}_{n} = x} \equiv 1 \bmod 2\Big\}.
\end{equation} 
Once again, we define
\begin{equation}
\theta_{k+1} = \max\big\{1 \leqslant n \leqslant \tau_{k+1}: X^{k+1}_{n} = X^{k+1}_{n-1}+1\big\}
\end{equation}
so that $\theta_{k+1}-1$ is the last time that the $(k+1)$-st explorer jumps to the right. Then we set
\begin{equation}\label{c_{k}_defn}
c_{k+1} = X^{k+1}_{\theta_{k+1}},
\end{equation}
and finally define the trap for the $(k+1)$-st particle to be at the site
\begin{equation}\label{b_{k}_defn}
b_{k+1} = \min\{a_{k+1}, c_{k+1}\}.
\end{equation}
Once again, if both $a_{k+1}$ and $c_{k+1}$ exceed $x_{k+1}$, we stop and declare the settling procedure to have failed, and define $b_{j} = \infty$ for all $j \geqslant k+1$. If the trap can be set up successfully, we have $b_{k+1} \in [b_{k}+1, x_{k+1}] \cap \mathbb{Z}$. As mentioned earlier, we declare the entire settling procedure to have been successful when it is possible to define a finite $b_{k}$ for every $k \in \mathbb{N}$.

\subsection{Success implies fixation}\label{success_fixation:subsec} We show here that, for given realizations of $\mathscr{I}$ and $\eta_{0}$, if the settling procedure succeeds, then $m_{\eta_{0}}(0) = 0$, which in turn implies fixation. To this end, we make use of Lemma~\ref{m(0)_finite} and the same argument as that given in~\cite{RollaSidoravicius12}, showing that if the settling procedure succeeds for all particles on both $\mathbb{Z}^{+}$ and $\mathbb{Z}^{-}$, then every finite interval can be stabilized with semi-legal half-topplings without ever toppling the origin. We note here that each step of an explorer corresponds either to a semi-legal half-toppling of a stable site, or a legal half-toppling of an unstable site, and if a site is visited more than once, then at least one of the last two moves has to correspond to the former scenario.

Suppose we have settled the particles starting at $x_{1}$, $x_{2}$, $\ldots$, $x_{k}$ and established that, at this point, the sites $b_{1}$, $b_{2}$, $\ldots$, $b_{k}$ are stable. If $b_{k+1} = c_{k+1}$, then the sites found by the $(k+1)$-st explorer have been successively half-toppled until the second-last half-toppling at $c_{k+1}$. If $c_{k+1}$ has been half-toppled an even number of times so far, then it is stable, and the particle is left there without executing any further half-toppling instruction revealed by the $(k+1)$-st explorer. If $c_{k+1}$ has been half-toppled an odd number of times so far, then the second-last half-toppling at $c_{k+1}$ is executed and the particle is again left at $c_{k+1}$. This ensures that $c_{k+1}$ is stable. After the last two instructions at $c_{k+1}$ had been revealed by the $(k+1)$-st explorer, it did not jump to the right again. Thus all half-toppling instructions revealed by the $(k+1)$-st explorer, except possibly some that lie in the interval $[b_{k}+1, c_{k+1}] \cap \mathbb{Z}$, are executed. Thus the instructions and sites corrupted by the $(k+1)$-st explorer lie inside $[b_{k}+1, c_{k+1}] \cap \mathbb{Z}$ and are never encountered by the $(k+2)$-nd explorer.

Now we consider the scenario where $b_{k+1} = a_{k+1} < c_{k+1}$. Note that the particle starting at $x_{k+1}$ is left at the site $a_{k+1}$ without executing the last half-toppling instruction at $a_{k+1}$ that the $(k+1)$-st explorer reveals. By definition of $a_{k+1}$, the total number of times $a_{k+1}$ is visited by the explorers starting at $x_{1}$, $x_{2}$, $\ldots$, $x_{k+1}$ is odd. This means that the total number of times the site $a_{k+1}$ has been half-toppled, not counting the last instruction at $a_{k+1}$ revealed by the $(k+1)$-st explorer, is even. Therefore, once the particle starting at $x_{k+1}$ is settled at $a_{k+1}$, the site $a_{k+1}$ is stabilized. The sites corrupted by the $(k+1)$-st explorer all lie in $[b_{k}+1, a_{k+1}] \cap \mathbb{Z}$, and are never encountered by the $(k+2)$-nd explorer during its journey.

Thus, by induction, we conclude that by the time the $(k+1)$-st particle is settled, all sites in $[0, b_{k+1}] \cap \mathbb{Z}$ are stable and the origin remains untouched. This argument works by symmetry for particles on $\mathbb{Z}^{-}$ when $q = \frac{1}{2}$. In the case where $q > \frac{1}{2}$, the event that each particle on $\mathbb{Z}^{-}$ escapes to $-\infty$ without ever hitting the origin has strictly positive probability. Therefore, on that event, any sequence of semi-legal topplings on $\mathbb{Z}^{-}$ keeps the origin untouched. This completes the argument that a successful settling procedure implies fixation.

In \S\ref{main:sec}, we show that the settling procedure succeeds with positive probability, implying that $\Prob^{\nu}(m_{\eta_{0}}(0) = 0) > 0$. Combined with Lemma~\ref{m(0)_finite}, this establishes that the system locally fixates almost surely. 

\section{Positive probability of success}\label{main:sec}
Suppose the initial average density $\zeta < \frac{1}{2}$ is fixed. Fix $\gamma_{1}$ and $\gamma_{2}$ such that $2 < \gamma_{2} < \gamma_{1} < \frac{1}{\zeta}$. By the strong law of large numbers, $\frac{k}{x_{k}} \rightarrow \zeta$ almost surely. Moreover, the event $\mathcal{A} = \{x_{k} > \gamma_{1} k \text{ for all } k \in \mathbb{N}\}$ has positive probability. 

We define the $\sigma$-field $\mathcal{G}_{k}$ containing the values of $b_{1}, b_{2}, \ldots, b_{k}$ as well as the starting site $x_{i}$ for each $i \in \mathbb{N}$. Given positive integers $M$ and $N$, we define the positive integer
\begin{equation}\label{K_{M,N}}
K_{M,N} = \Big\lceil \frac{\gamma_{1} M + N}{\gamma_{1} - \gamma_{2}} \Big\rceil,
\end{equation}
and for each $k \geqslant K_{M,N}$, we define the event
\begin{multline}\label{B_{k}^{M,N}_event}
\mathcal{B}_{k}^{M, N} = \{b_{K_{M,N}} < \gamma_{2} K_{M,N} \text{ and } b_{K_{M,N}+j} \leqslant b_{K_{M,N}} + \gamma_{2} j \text{ for all }\\ j = 1, \ldots, k - K_{M,N}\}, \nonumber
\end{multline}
which has positive probability. The purpose of choosing $K_{M,N}$ as in \eqref{K_{M,N}} is to guarantee that on the event $\mathcal{B}_{k}^{M,N} \cap \mathcal{A}$, we have $b_{k}+N < x_{k-M}$, so that it leaves a large enough terrain for the estimates we shall make for the $(k+1)$-st explorer. 

The event $\mathcal{B}_{k}^{M, N}$ also requires that the settling procedure has not failed for the first $k$ particles on $\mathbb{Z}^{+}$. The sequence $\{\mathcal{B}_{k}^{M,N}\}_{k \in \mathbb{N}}$ is decreasing and we denote its limit by $\mathcal{B}_{\infty}^{M,N}$. That the settling procedure succeeds with positive probability follows from the following lemma. 
\begin{lemma}\label{main_2}
We can choose $M$ and $N$ so that $\Prob[\mathcal{B}_{\infty}^{M, N}] > 0$.
\end{lemma}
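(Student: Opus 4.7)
My plan is to reduce Lemma~\ref{main_2} to a standard random-walk-stays-below-zero argument. The key is to show that the increments $b_{k+1}-b_k$ are stochastically dominated, uniformly in $k$ and in the history $\mathcal{G}_k$, by a random variable $\eta$ of mean strictly less than $\gamma_2$ and with exponential tails. On the event $\mathcal{A}\cap\mathcal{B}_k^{M,N}$, the choice of $K_{M,N}$ guarantees $x_{k+1}-b_k\geq N$, so that the $(k+1)$-st explorer (a biased random walk with left-jump probability $q$, started at $x_{k+1}$ and stopped at $b_k$) has ample room to hit $b_k$, and moreover at least $M$ preceding explorers have passed through the window $(b_k,b_k+\gamma_2]$.

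I would then control the two components of $b_{k+1}=\min(a_{k+1},c_{k+1})$ separately. For $c_{k+1}-b_k$, time-reversing the first-passage walk shows that it has the law of the initial run of $+1$ steps of a walk starting at $b_k$ with step-right probability $q$, conditioned to reach $x_{k+1}$ without returning; this yields a geometric bound $\Prob[c_{k+1}-b_k\geq j\mid\mathcal{G}_k]\leq q^{j-1}(1+o_N(1))$. For $a_{k+1}-b_k$, I would invoke the parity observation from \S\ref{proof_method:sec}: the cumulative parity field $\pi_y\equiv\sum_{j\leq k+1}\#\{n:X^{j}_n=y\}\pmod 2$ at sites $y\in(b_k,b_k+\gamma_2]$ is, conditionally on $\mathcal{G}_k$, close to i.i.d.\ Bernoulli$(1/2)$, so $\Prob[a_{k+1}-b_k\geq j\mid\mathcal{G}_k]\leq(\tfrac{1}{2}+o_M(1))^{j-1}$. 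Taking the minimum and using $\gamma_2>2$, I would obtain a uniform stochastic domination $b_{k+1}-b_k\preceq\eta$ with $\E[\eta]<\gamma_2$ and exponential tails, provided $M$ and $N$ are chosen large enough.

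Setting $S_j:=b_{K_{M,N}+j}-b_{K_{M,N}}-\gamma_2 j$, the event $\mathcal{B}_\infty^{M,N}$ demands $S_j\leq 0$ for every $j\geq 1$, on top of the finite-dimensional requirement $b_{K_{M,N}}<\gamma_2 K_{M,N}$. The domination makes $(S_j)$ pathwise bounded above by an i.i.d.\ random walk $\widetilde S_j$ with negative drift $\E[\eta]-\gamma_2<0$ and exponential tails; for such a walk, classical random-walk theory (Sparre-Andersen or Wiener-Hopf) gives $\Prob[\sup_{j\geq 1}\widetilde S_j\leq 0]>0$. Combined with the positive probability of the base event $\mathcal{B}_{K_{M,N}}^{M,N}\cap\mathcal{A}$, this yields $\Prob[\mathcal{B}_\infty^{M,N}]>0$.

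The principal obstacle is the parity claim used to bound $a_{k+1}$. Making it rigorous requires a careful account of which of the first $k$ explorers actually visited each site $y\in(b_k,b_k+\gamma_2]$, and why the accumulated parities are jointly close to i.i.d.\ uniform in $\{0,1\}$ conditionally on $\mathcal{G}_k$. This is the specific novelty of the present paper over~\cite{RollaSidoravicius12} and is precisely what enables the improvement from the $q$-dependent bound $\zeta_c\geq q(1-q)$ to the uniform bound $\zeta_c\geq\tfrac{1}{2}$.
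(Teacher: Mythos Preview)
Your proposal is correct and follows essentially the same route as the paper: invoke the content of Lemma~\ref{main_lemma} to bound the conditional tails of $a_{k+1}-b_k$ and $c_{k+1}-b_k$, dominate the increments $b_{k+1}-b_k$ by an i.i.d.\ sequence of mean strictly less than $\gamma_2$, and conclude via the standard fact that a negative-drift random walk stays non-positive forever with positive probability. The only slip is the size of the parity window, which must be $(b_k,b_k+N]$ rather than $(b_k,b_k+\gamma_2]$: the $c_{k+1}$ tail is only $\rho^s$ with $\rho$ possibly close to $1$ (when $q$ is close to $1$), so one needs $N$ large enough that $\sum_{s>N}\rho^s$ is small, and this is exactly how the paper chooses $\varepsilon$ and then $N$.
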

The proof of Lemma~\ref{main_2} is accomplished based on the following lemma.
\begin{lemma}\label{main_lemma}
Given $\varepsilon > 0$, there exists a positive integer $M = M_{\varepsilon}$ such that for all $N \geqslant 2$ and all $k \geqslant K_{M,N}$, on the event $\mathcal{B}_{k}^{M, N} \cap \mathcal{A}$, 
\begin{equation}\label{b_{k+1}-b_{k}_tail_upto_N}
\Prob[a_{k+1} - b_{k} > s\big|\mathcal{G}_{k}] \leqslant \Big(\frac{1}{2}+\varepsilon\Big)^{s} \quad \text{for all } s \leqslant N,
\end{equation}
and
\begin{equation}\label{b_{k+1}-b_{k}_tail_beyond_N}
\Prob[c_{k+1} - b_{k} > s\big|\mathcal{G}_{k}] \leqslant \rho^{s} \quad \text{for all } s > N,
\end{equation}
for some constant $\rho \in (0,1)$.
\end{lemma}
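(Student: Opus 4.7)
The plan is to establish the two bounds by distinct mechanisms and combine them at the splitting point $N$. For (\ref{b_{k+1}-b_{k}_tail_beyond_N}), conditional on $\mathcal{G}_k$ the $(k+1)$-st explorer performs a random walk from $x_{k+1}$ stopped at the first hitting of $b_k$, with leftward step probability $q \geq 1/2$. The event $\{c_{k+1} - b_k \geq r\}$ says the last $r$ steps form a clean leftward descent from $b_k + r$ to $b_k$. A strong-Markov decomposition, summing over visits to $b_k + r$ before the hitting of $b_k$ and using that $r$ consecutive leftward steps have probability $q^r$, yields the identity $\Prob[c_{k+1} - b_k \geq r \mid \mathcal{G}_k] = q^r \cdot G(x_{k+1}, b_k + r)$, where $G$ denotes the Green's function (expected occupation time) of the walk killed at $b_k$. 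For $q = 1/2$ this Green's function grows at most linearly in $r$; for $q > 1/2$ it is uniformly bounded in terms of $q$ alone. Choosing $N$ sufficiently large relative to a slightly enlarged base $\rho \in (q, 1)$ (or $\rho \in (1/2, 1)$ in the symmetric case) absorbs the polynomial prefactor, yielding $\rho^s$ for $s > N$.

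For (\ref{b_{k+1}-b_{k}_tail_upto_N}) I use the identity $V_{k+1}(x) = u_{x-1} + u_x + 1$, valid for $x \in (b_k, x_{k+1})$, where $u_y$ is the number of upward crossings of the edge $\{y, y+1\}$ by the $(k+1)$-st explorer. Setting $P_y = u_y \bmod 2$ and noting $P_{b_k} = 0$ deterministically (since the explorer takes no step from $b_k$), the event $\{a_{k+1} > b_k + s\}$ translates into a specific $\mathbb{F}_2$-linear prescription of the tuple $(P_{b_k+1}, \ldots, P_{b_k+s})$, once the $\mathcal{G}_k$-measurable parities $F_y = \sum_{j \leq k} V_j(y) \bmod 2$ are fixed. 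The task therefore reduces to bounding the probability of any specific configuration of this tuple by $(1/2 + \varepsilon)^s$.

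A Ray-Knight-type analysis shows that $(u_y)_{y \geq b_k}$ is a Markov chain in $y$, starting from $u_{b_k} = 0$, with $u_{y+1}$ conditional on $u_y = u$ distributed as the sum of $u + 1$ independent geometric variables with $\Prob[Z = k] = (1-q)^k q$. A direct character computation gives $\E[(-1)^{u_{y+1}} \mid u_y = u] = (q/(2-q))^{u+1}$, which decays geometrically in $u$, so the conditional parity $P_{y+1}$ becomes asymptotically $\mathrm{Bern}(1/2)$ as $u_y$ grows. On the event $\mathcal{B}_k^{M,N} \cap \mathcal{A}$ the walk is long, with $x_{k+1} - b_k$ growing linearly in $k$, so choosing $M = M_\varepsilon$ sufficiently large makes $u_y$ typically large throughout $y \in [b_k + 1, b_k + N]$; propagating the above mixing estimate through the Markov chain yields the joint parity bound. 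The main obstacle is precisely this mixing step: because the chain is seeded by $u_{b_k} = 0$, the early parities are visibly biased (for instance $u_{b_k + 1}$ is exactly $\mathrm{Geom}(q)$), and the geometric decay above must be propagated along enough sites of the chain to absorb the boundary bias into the target factor $\varepsilon$.
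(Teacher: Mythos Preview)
Your treatment of \eqref{b_{k+1}-b_{k}_tail_beyond_N} via the Green's function of the walk killed at $b_k$ is correct and equivalent to the paper's $h$-transform computation on the time-reversed walk (both yield $(s+1)2^{-s}$ when $q=\tfrac12$ and $\frac{q^{s+1}-(1-q)^{s+1}}{2q-1}$ when $q>\tfrac12$). The gap is in your argument for \eqref{b_{k+1}-b_{k}_tail_upto_N}. First, $F_y=\sum_{j\leqslant k}V_j(y)\bmod 2$ is \emph{not} $\mathcal{G}_k$-measurable: by definition $\mathcal{G}_k$ records only $b_1,\dots,b_k$ and the starting points $x_i$, not the trajectories. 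More importantly, even after conditioning on the $F_y$, your single-explorer Ray--Knight mechanism cannot deliver the factor $\tfrac12+\varepsilon$ at the first site. The chain is seeded at $u_{b_k}=0$, so $u_{b_k+1}$ is a single geometric variable and $\Ex[(-1)^{u_{b_k+1}}]=q/(2-q)$ (equal to $\tfrac13$ when $q=\tfrac12$), giving $\Prob[P_{b_k+1}=\text{prescribed bit}]\in\{\tfrac13,\tfrac23\}$. This is entirely insensitive to the distance $x_{k+1}-b_k$, so enlarging $M$---which in your reading only lengthens the walk---does nothing at $s=1$, and the boundary bias you flag cannot be ``propagated away'' while still proving the inequality \emph{for every} $s\leqslant N$.

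The paper assigns $M$ a different role: it is the number of \emph{explorers} being summed, not a walk-length parameter. The event $\mathcal{B}_k^{M,N}\cap\mathcal{A}$ is engineered precisely so that $b_k+N<x_{k-M}$, hence each explorer $j\in\{k-M,\dots,k+1\}$ traverses every site $b_k+s$ for $1\leqslant s\leqslant N$. Working with the reversed $h$-transformed walks $Z^j$, the number of visits of explorer $j$ to $b_k+s$ between its last visit to $b_k+s-1$ and its last visit to $b_k+s$ is i.i.d.\ across these $M+2$ values of $j$ and independent of the relevant past $\sigma$-field $\mathcal{S}_k^{s-1}$. One then applies the elementary fact that the parity of a sum of $M+2$ i.i.d.\ nondegenerate integers is within $\varepsilon$ of uniform once $M$ is large, which supplies the factor $\tfrac12+\varepsilon$ at every step, including $s=1$. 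In short, the randomization comes from many explorers each passing through the site, not from one explorer's local time being large; your proposal uses the wrong source of independence and therefore cannot close the boundary gap.
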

As a side remark, the constant $\rho$ depends only on $q$, but we do not need this fact in the proof of Lemma~\ref{main_lemma}. The proof of Lemma~\ref{main_lemma} involves several steps. In \S\ref{proof:subsec:1}, the trajectories of the explorers are identified with suitable $h$-transforms of random walks, in \S\ref{proof:subsec:2}, a general result regarding the parity of the sum of i.i.d.\ integer-valued random variables is discussed, and \S\ref{proof:subsec:3} and \S\ref{proof:subsec:4} are respectively dedicated to the proofs of \eqref{b_{k+1}-b_{k}_tail_upto_N} and \eqref{b_{k+1}-b_{k}_tail_beyond_N}. 

We now prove that Lemma~\ref{main_lemma} implies Lemma~\ref{main_2}.
We choose $\varepsilon > 0$ and $N \in \mathbb{N}$ such that 
\begin{equation}
\sum_{s=0}^{N} \Big(\frac{1}{2} + \varepsilon\Big)^{s} + \sum_{s=N+1}^{\infty} \rho^{s} < \gamma_{2}. \nonumber
\end{equation}
We now set $M = M_{\varepsilon}$ as in Lemma~\ref{main_lemma} and $K = K_{M,N}$ as in \eqref{K_{M,N}}. Define an i.i.d.\ sequence $\{Y_{i}\}_{i}$, independent of both the sequences $\{x_{i}\}_{i}$ and $\{b_{i}\}_{i}$, whose distribution is
\[ \Prob[Y_{i} > s] =
\begin{cases}
\Big(\frac{1}{2}+\varepsilon\Big)^{s} & \text{for all } s = 1, \ldots, N,\\
\min\Big\{\Big(\frac{1}{2} + \varepsilon\Big)^{N}, \rho^{s}\Big\} & \text{for all } s > N.
\end{cases} \]
By our choice of $\varepsilon$ and $N$, we have $\Ex[Y_{i}] < \gamma_{2}$, so that 
\begin{equation}\label{all_Y_{j}_smaller}
\inf_{j \in \mathbb{N}} \Prob\Big[\sum_{i=1}^{\ell} Y_{i} < \gamma_{2} \ell \text{ for all } \ell = 1, \ldots, j\Big] > 0.
\end{equation} 
Define $U_{j+1} = (b_{K+j+1} - b_{K+j}) \1_{b_{K+\ell} \leqslant b_{K}+\gamma_{2}\ell \text{ for all } \ell = 1, \ldots, j}$, for all $j \geqslant 0$. By Lemma~\ref{main_lemma}, conditioned on $\mathcal{G}_{K+j}$, the random variable $U_{j+1}$ is stochastically dominated by $Y_{j+1}$, for all $j \geqslant 0$. We claim that
\begin{equation}\label{stoch_dom_claim}
(U_{1}, \ldots, U_{j}) \preccurlyeq (Y_{1}, \ldots, Y_{j}) \text{ for all } j \in \mathbb{N},
\end{equation}
where ``$\preccurlyeq$'' denotes the usual stochastic domination of random vectors. We prove this claim by induction on $j$. Let $f$ be a test function that is monotonically increasing and bounded. Note that $Y_{j+1}$ is independent of $\mathcal{G}_{K+j}$. We then have
\begin{align}
\Ex f(U_{1}, \ldots, U_{j}, U_{j+1}) &= \Ex \Ex\big[f(U_{1}, \ldots, U_{j}, U_{j+1})\big|\mathcal{G}_{K+j}\big] \nonumber\\
&\leqslant \Ex \Ex\big[f(U_{1}, \ldots, U_{j}, Y_{j+1})\big|\mathcal{G}_{K+j}\big] \nonumber\\
&= \Ex f(U_{1}, \ldots, U_{j}, Y_{j+1}) \nonumber\\
&= \Ex \Ex\big[f(U_{1}, \ldots, U_{j}, Y_{j+1})\big|Y_{j+1}\big] \nonumber\\
&\leqslant \Ex \Ex\big[f(Y_{1}, \ldots, Y_{j}, Y_{j+1})\big|Y_{j+1}\big] \nonumber\\
&= \Ex f(Y_{1}, \ldots, Y_{j}, Y_{j+1}). \nonumber
\end{align}
The first of the two inequalities above follows from the fact that $U_{1}, \ldots, U_{j}$ are measurable with respect to $\mathcal{G}_{K+j}$, and conditioned on $\mathcal{G}_{K+j}$, we have $U_{j+1} \preccurlyeq Y_{j+1}$. The second inequality follows from induction hypothesis and the fact that both $(U_{1}, \ldots, U_{j})$ and $(Y_{1}, \ldots, Y_{j})$ are independent of $Y_{j+1}$. This proves the claim of \eqref{stoch_dom_claim} by induction. 

To conclude, we define the transformation $\Psi$ on sequences $\mathbf{y} = (y_{\ell})_{\ell = 1, \ldots, j}$ by $(\Psi \mathbf{y})_{1} = y_{1}$ and for all $m = 2, \ldots, j+1$, 
\[
 (\Psi \mathbf{y})_{m} = 
 \begin{cases} 
 y_{m} & \text{if } \sum_{\ell=1}^{r} y_{\ell} \leqslant \gamma_{2}r \text{ for all } r = 1, \ldots, m-1; \\
 \infty & \text{otherwise.}
 \end{cases}
\]
Since $\Psi$ is non-decreasing, $\Psi(U_{1}, \ldots, U_{j}) \preccurlyeq \Psi(Y_{1}, \ldots, Y_{j})$, and by \eqref{all_Y_{j}_smaller}, we get $\Prob[\mathcal{B}_{\infty}^{M,N}] > 0$, concluding the proof of Lemma~\ref{main_2}. 

\subsection{Reversed trajectories and $h$-transform}\label{proof:subsec:1}

Consider the trajectory of the $k$-th explorer, denoted $\{X^{k}_{n}\}_{0 \leqslant n \leqslant \tau_{k}}$, where $\tau_{k}$ is the first time that it hits the barrier at $b_{k-1}$. This path is distributed as a random walk $\widetilde{W}$ coming from $+\infty$ towards $-\infty$, with transition probabilities $p(-1) = 1-p(+1) = q$, between the \emph{first} time it hits $x_{k}$ and the \emph{first} time it hits $b_{k-1}$. The time-reversed walk $W$ goes from $-\infty$ to $\infty$ with jump probabilities $\tilde{p}(+1) = 1-\tilde{p}(-1) = q$. Then $\{X^{k}_{\tau_{k}-n}\}_{0 \leqslant n \leqslant \tau_{k}}$ becomes the piece of $W$ between the \emph{last} time it visits $b_{k-1}$ and the \emph{last} time it visits $x_{k}$. However, the path of $W$ observed after the \emph{last} visit to the origin $x = 0$ has the distribution of an $h$-transformed random walk that is conditioned to start at $0$ and remain strictly positive thereafter. We denote this $h$-transformed walk by $\{Z^{k}_{n}\}_{n \geqslant 0}$, and let $\sigma^{k}_{x}$ denote the last time it visits the site $x$, for all $x \in \mathbb{Z}^{+}$. We can thus identify $\{X^{k}_{\tau_{k}-n}\}_{0 \leqslant n \leqslant \tau_{k}}$ with $\big\{Z^{k}_{\sigma^{k}_{b_{k-1}}+n}\big\}_{0 \leqslant n \leqslant \sigma^{k}_{x_{k}} - \sigma^{k}_{b_{k-1}}}$. 

Henceforth, we shall use this representation of the trajectory of the $k$-th explorer. Note that the law of the process $\{Z^{k}_{n+j}\}_{n \geqslant 0}$, conditioned on $\sigma^{k}_{x} = j$, is the same as the law of the process $\{x+Z^{k}_{n}\}_{n \geqslant 0}$, for every site $x \in \mathbb{Z}^{+}$ and every $j \in \mathbb{N}$. This shows that, for each fixed $k$, the processes $\{Z^{k}_{n}\}_{\sigma^{k}_{x} \leqslant n \leqslant \sigma^{k}_{x+1}}$ are i.i.d.\ over all sites $x \geqslant 0$.

Note, from \eqref{c_{k}_defn}, that the random variable $c_{k+1} - b_{k}$, conditioned on $\mathcal{G}_{k}$, has the same distribution as the site on the positive integer line where the random walk $\{Z^{k+1}_{n}\}_{n}$ jumps to the left for the first time. Equivalently,
\begin{equation}\label{c_{k+1}-b_{k}_reversed_walk}
\Prob[c_{k+1} - b_{k} > s\big|\mathcal{G}_{k}] = \Prob\big[Z^{k+1}_{n} = n \text{ for all } n = 1, \ldots s+1\big]. 
\end{equation}

\subsection{Parity of the sum of i.i.d.\ variables}\label{proof:subsec:2} 
We make an elementary observation in this subsection.
\begin{lemma}\label{lem:parity_general}
Let $\{X_{n}\}$ be an i.i.d.\ sequence of integer-valued random variables. If $0 < \Prob[X_{1} \equiv 0 \bmod 2] < 1$, then 
\begin{equation}
\lim_{n \rightarrow \infty} \Prob\Big[\sum_{i=1}^{n} X_{i} \equiv 0 \bmod 2\Big] = \frac{1}{2}. \nonumber
\end{equation}
\end{lemma}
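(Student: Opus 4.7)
The plan is to reduce the question to a two-state problem by passing to parities and then use the simplest possible Fourier-style computation on $\mathbb{Z}/2\mathbb{Z}$.

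First I would set $Y_i = X_i \bmod 2$ and observe that the $Y_i$ are i.i.d.\ Bernoulli random variables with parameter $p := \Prob[X_1 \equiv 1 \bmod 2]$, where the hypothesis of the lemma exactly says $p \in (0,1)$. Since the parity of $S_n := \sum_{i=1}^n X_i$ coincides with the parity of $\sum_{i=1}^n Y_i$, it suffices to compute $\Prob[\sum_{i=1}^n Y_i \equiv 0 \bmod 2]$ in the limit.

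The cleanest route is to evaluate the characteristic function at $-1$. Observe that $(-1)^{S_n} = \prod_{i=1}^n (-1)^{X_i}$, and by independence $\Ex[(-1)^{S_n}] = \Ex[(-1)^{X_1}]^n = (1-2p)^n$. Because $p \in (0,1)$ we have $|1-2p| < 1$, so $\Ex[(-1)^{S_n}] \to 0$. Finally, writing $\mathbf{1}_{\{S_n \text{ even}\}} = \tfrac12(1 + (-1)^{S_n})$ and taking expectations gives
\begin{equation}
\Prob[S_n \equiv 0 \bmod 2] = \frac{1 + (1-2p)^n}{2} \xrightarrow[n\to\infty]{} \frac{1}{2}, \nonumber
\end{equation}
which is the desired conclusion. (Equivalently, one could set $q_n := \Prob[S_n \equiv 0 \bmod 2]$ and derive the one-step recursion $q_{n+1} = (1-p) q_n + p(1-q_n)$, i.e.\ $q_{n+1} - \tfrac12 = (1-2p)(q_n - \tfrac12)$, from which the same exponential decay follows.)

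There is no real obstacle here: the only place where the hypothesis is used is to guarantee $|1-2p| < 1$, and degeneracy is explicitly excluded. The rest is a one-line computation. I would simply present the $(-1)^{S_n}$ argument for brevity.
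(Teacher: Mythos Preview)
Your proof is correct and essentially identical to the paper's: the paper sets $Y_i = 1$ when $X_i$ is even and $-1$ when $X_i$ is odd (which is exactly your $(-1)^{X_i}$), computes $\Ex[\prod_i Y_i] = (2p_1-1)^n$ by independence, and reads off $|p_n - \tfrac12| = \tfrac12|2p_1-1|^n$. The only cosmetic difference is that the paper phrases the inductive step as a recursion on $p_n$ rather than writing the indicator identity $\tfrac12(1+(-1)^{S_n})$, but the computation is the same.
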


\begin{proof}
We define the random variable $Y_{i}$ which takes the value $1$ when $X_{i}$ is even, and the value $-1$ when $X_{i}$ is odd. Note that $\sum_{i=1}^{n} X_{i}$ is even if and only if an even number of the $X_{i}$'s are odd, which is equivalent to $\prod_{i=1}^{n} Y_{i}$ being equal to $1$. Note that $p_{1} = \Prob[Y_{1} = 1]$, and let $p_{n} = \Prob[\prod_{i=1}^{n} Y_{i} = 1]$ for all $n \geqslant 2$. Then $\Ex[\prod_{i=1}^{n} Y_{i}] = 2p_{n}-1$. Since we assume that $0 < p_{1} < 1$, we have $-1 < 2p_{1}-1 < 1$. Using independence, we have
\begin{equation}
|2p_{n+1}-1| = \big|\Ex[\prod_{i=1}^{n+1} Y_{i}]\big| = |2p_{n}-1| |2p_{1}-1|, \nonumber
\end{equation}
implying
\begin{equation}
\big|p_{n} - \frac{1}{2}\big| = \frac{1}{2}|2p_{1}-1|^{n}, \nonumber
\end{equation}
which establishes exponential decay. This shows that $p_{n}$ converges to $\frac{1}{2}$ at an exponentially fast rate that depends only on $p_{1}$.
\end{proof} 


\subsection{The conditional distribution of $a_{k+1} - b_{k}$}\label{proof:subsec:3} Fix $M \geqslant 2$ and $N \geqslant 2$, set $K = K_{M,N}$ as in \eqref{K_{M,N}}, and fix $k \geqslant K_{M,N}$. Later on we shall select $M$ depending on $\varepsilon$ whereas $N$ will remain free. The definitions and estimates that follow depend implicitly on $M$ and $N$. In this subsection, we prove \eqref{b_{k+1}-b_{k}_tail_upto_N}. Recalling from \S\ref{proof:subsec:1} how $\{Z^{j}_{n}\}_{n}$ relates with $\{X^{j}_{n}\}_{n}$, we introduce 
\begin{equation}
D_{k,s} = \sum_{j=1}^{k} \sum_{n = \sigma^{j}_{b_{j-1}}}^{\sigma^{j}_{x_{j}}} \1_{Z^{j}_{n}=b_{k}+s} \nonumber
\end{equation}
for all $s \geqslant 1$, and rewrite \eqref{a_{k}_defn} as follows:
\begin{equation}\label{new_representation_a}
a_{k+1} = b_{k} + \min\{s \geqslant 1: D_{k,s} \equiv 1 \bmod 2\}. \nonumber
\end{equation} 

In order to analyse $D_{k,s}$, we break it into a ``known'' component and an ``unknown'' component conveniently. To this end, for $s \geqslant 0$, define the $\sigma$-field $\mathcal{S}_{k}^{s}$ which contains the following information: 
\begin{enumerate}
\item the values of $x_{j}$ for all $j \in \mathbb{N}$;
\item the value of $b_{k}$;
\item the trajectories $\{Z^{j}_{n}\}_{n = \sigma^{j}_{b_{j-1}}, \dots, \sigma^{j}_{x_{j}}}$ for $j = 1, \ldots, k-M-1$;
\item\label{iv} the pieces $\{Z^{j}_{n}\}_{n = \sigma^{j}_{b_{j-1}}, \dots, \sigma^{j}_{b_{k}+s}}$ for $j = k-M, \ldots, k+1$.
\end{enumerate}
Note that each $\mathcal{S}_{k}^{s}$ is a finer $\sigma$-field than $\mathcal{G}_{k}$. 

For every $s \geqslant N$, we write $D_{k,s} = D^{1}_{k,s} + D^{2}_{k,s}$ where
\begin{equation}
D_{k,s}^{1} = \sum_{j = 1}^{k-M-1} \sum_{n=\sigma^{j}_{b_{j-1}}}^{\sigma^{j}_{x_{j}}} \1_{Z^{j}_{n} = b_{k}+s} + \sum_{j=k-M}^{k+1} \sum_{n=\sigma^{j}_{b_{j-1}}}^{\sigma^{j}_{b_{k}+s-1}} \1_{Z^{j}_{n} = b_{k}+s} \nonumber
\end{equation}
and
\begin{equation}\label{random_part_D^{2}}
D_{k,s}^{2} = \sum_{j=k-M}^{k+1} \sum_{n=\sigma^{j}_{b_{k}+s-1}}^{\sigma^{j}_{b_{k}+s}} \1_{Z^{j}_{n} = b_{k}+s}.
\end{equation} 
Note that $D_{k,s}^{1}$ is measurable with respect to $\mathcal{S}_{k}^{s-1}$ and $D_{k,s}^{2}$ is independent of $\mathcal{S}_{k}^{s-1}$. This in turn implies that the event $\{a_{k+1} - b_{k} > s\}$ is contained in $\mathcal{S}_{k}^{s}$. We now show that, for every $s \geqslant 1$, on the event $\{a_{k+1} - b_{k} > s\}$, we have
\begin{equation}\label{ind_claim_new}
\Prob[a_{k+1} - b_{k} > s\big|\mathcal{S}_{k}^{s-1}] \leqslant \frac{1}{2} + \varepsilon.
\end{equation}
From this we establish \eqref{b_{k+1}-b_{k}_tail_upto_N} inductively on $s$, as follows:
\begin{align}
\Prob[a_{k+1}-b_{k} > s+1\big|\mathcal{G}_{k}] &= \Ex[\1_{a_{k+1} - b_{k} > s+1}\big|\mathcal{G}_{k}] \nonumber\\
&= \Ex\big[\Ex[\1_{a_{k+1} - b_{k} > s+1}\big|\mathcal{S}_{k}^{s}]\big|\mathcal{G}_{k}\big] \nonumber\\
&= \Ex\big[\Ex[\1_{a_{k+1} - b_{k} > s} \1_{a_{k+1}-b_{k} > s+1}\big|\mathcal{S}_{k}^{s}]\big|\mathcal{G}_{k}\big] \nonumber\\
&= \Ex\big[\1_{a_{k+1} - b_{k} > s} \Ex[\1_{a_{k+1}-b_{k} > s+1}\big|\mathcal{S}_{k}^{s}]\big|\mathcal{G}_{k}\big] \nonumber\\
&\leqslant \Big(\frac{1}{2} + \varepsilon\Big) \Ex[\1_{a_{k+1} - b_{k} > s}\big|\mathcal{G}_{k}] \leqslant \Big(\frac{1}{2} + \varepsilon\Big)^{s+1}. \nonumber 
\end{align}

The proof of \eqref{ind_claim_new} happens via induction on $s$ as well. The base case of $s = 1$ is resolved by noting that, conditioned on $\mathcal{S}_{k}^{0}$, the random variable $D_{k,1}^{2}$ is the sum of i.i.d.\ random variables $\sum_{n=\sigma^{j}_{b_{k}}}^{\sigma^{j}_{b_{k}+1}} \1_{Z^{j}_{n} = b_{k}+1}$ for $j = k-M, \ldots, k+1$, and applying Lemma~\ref{lem:parity_general} for all $M$ large enough. 

Suppose we have proved \eqref{ind_claim_new} for some $s < N$. The probability $\Prob[a_{k+1} - b_{k} > s+1\big|\mathcal{S}_{k}^{s}]$ is equal to
\begin{equation}
\Prob\Big[\sum_{j=k-M}^{k+1} \sum_{n = \sigma^{j}_{b_{k}+s}}^{\sigma^{j}_{b_{k}+s+1}} \1_{Z^{j}_{n} = b_{k}+s+1} + D_{k,s+1}^{1} \equiv 0 \bmod 2\big|\mathcal{S}_{k}^{s}\Big], \nonumber 
\end{equation}
where the summands $\sum_{n = \sigma^{j}_{b_{k}+s}}^{\sigma^{j}_{b_{k}+s+1}} \1_{Z^{j}_{n} = b_{k}+s+1}$ are i.i.d.\ over $j = k-M, \ldots, k+1$. By Lemma~\ref{lem:parity_general}, the above probability is bounded above by $\frac{1}{2} + \varepsilon$ if $M$ is chosen large enough. This completes the inductive proof.

\subsection{The conditional distribution of $c_{k+1} - b_{k}$}\label{proof:subsec:4} Here, $N \geqslant 2$ is fixed and \eqref{b_{k+1}-b_{k}_tail_beyond_N} will be established. We fix $k$ and henceforth omit the superscript from $\{Z_{n}^{k+1}\}_{n}$. Recall from \S\ref{model} that $q$ denotes the random walk parameter. The analysis of $c_{k+1} - b_{k}$ will be split into two parts: $q = \frac{1}{2}$ and $\frac{1}{2} < q < 1$. 

\subsubsection{When the sandpile dynamics has no bias}\label{no_bias} We assume that $q = \frac{1}{2}$. The $h$-transformed walk $\{Z_{n}\}_{n}$ uses the harmonic function $h(x) = x$ for $x \in \mathbb{Z}_{+}$. Therefore the transition kernel of $\{Z_{n}\}_{n}$ is given by 
\begin{equation}\label{transition_h_transform_unbiased}
\Prob[Z_{n+1} = x+1|Z_{n} = x] = \frac{1}{2} \big(\frac{x+1}{x}\big) \text{ for all } x \in \mathbb{N}, 
\end{equation}
From \eqref{c_{k+1}-b_{k}_reversed_walk}, for all $s > N$, we have
\begin{align}
\Prob[c_{k+1} - b_{k} > s\big|\mathcal{G}_{k}] &= \prod_{x=1}^{s} \Prob[Z_{x+1} = x+1|Z_{x} = x] \nonumber\\
&= \prod_{x=1}^{s} \frac{1}{2} \big(\frac{x+1}{x}\big) = \frac{s+1}{2^{s}}, \nonumber
\end{align}
thus establishing \eqref{b_{k+1}-b_{k}_tail_beyond_N} in this case.

\subsubsection{When the jumps have a negative bias}\label{neg_bias} We assume that $\frac{1}{2} < q < 1$. Let $\Prob_{x}$ denote the law of a random walk $\{W_{n}\}_{n}$ that starts at $W_{0} = x$ and jumps by an increment of $+1$ with probability $q$ and by $-1$ with probability $1-q$. Recall from \S\ref{proof:subsec:1} that $\{Z_{n}\}_{n}$ is the $h$-transform of $\Prob_{0}$, i.e.\ it starts at $0$ and is conditioned on staying strictly positive thereafter. We establish \eqref{b_{k+1}-b_{k}_tail_beyond_N} by providing an explicit form for the transition kernel of $\{Z_{n}\}_{n}$. This is given by
\begin{equation}\label{transition_h_transform_-ve_bias}
\Prob[Z_{n+1} = x+1|Z_{n} = x] = \frac{q^{x+1} - (1-q)^{x+1}}{q^{x} - (1-q)^{x}}
\end{equation}
for all $n$ and all $x \in \mathbb{N}$, and $\Prob[Z_{1} = 1|Z_{0} = 0] = 1$. 

Then, for all $s > N$, from \eqref{c_{k+1}-b_{k}_reversed_walk}, we have
\begin{align}\label{c_{k+1}-b_{k}_bound_+ve_bias}
\Prob[c_{k+1} - b_{k} > s\big|\mathcal{G}_{k}] &= \Prob\big[Z_{n} = n \text{ for all } x = 1, \ldots s+1\big] \nonumber\\
&= \prod_{x=1}^{s} \Prob[Z_{x+1} = x+1|Z_{x} = x] \nonumber\\
&= \prod_{x=1}^{s} \frac{q^{x+1} - (1-q)^{x+1}}{q^{x} - (1-q)^{x}} \nonumber\\
&= \frac{q^{s+1} - (1-q)^{s+1}}{2q - 1}, \nonumber
\end{align}
which establishes \eqref{b_{k+1}-b_{k}_tail_beyond_N}.

It remains to justify \eqref{transition_h_transform_-ve_bias}. Note that the function $h$, defined as
\begin{equation}\label{h_transform_explicit}
h(x) = 1 - \Big(\frac{1-q}{q}\Big)^{x} \text{ for all } x \in \mathbb{Z}_{+}, \nonumber
\end{equation}
is strictly positive and $q$-harmonic on $\mathbb{N}$, and $h(0) = 0$. Hence the law of $\{Z_{n}\}_{n}$ is given by $Z_{1} = 1$ and
\begin{equation}
\Prob[Z_{n+1} = x+1|Z_{n} = x] = \frac{h(x+1)}{h(x)} q \text{ for all } x \geqslant 1, \nonumber
\end{equation} 
giving us \eqref{transition_h_transform_-ve_bias}.

\parskip 0pt
\setstretch{1}
\small
\bibliographystyle{bib/leoabbrv}
\bibliography{bib/leo}

\end{document}